\numberwithin{equation}{section}
\theoremstyle{plain}
\newtheorem{prop}{Proposition}
\newtheorem{theo}{Theorem}
\newtheorem{lem}{Lemma}
\newtheorem{ex}{Example}
\numberwithin{equation}{section}
\numberwithin{lem}{section}
\numberwithin{prop}{section}
\numberwithin{theo}{section}
\numberwithin{cor}{section}
\title{Nonparametric local polynomial regression for functional covariates}
\author{Moritz Jirak \\ \vspace*{-.3cm}{\footnotesize  Institut für Statistik und Operations Research, Universit\"at Wien,  Oskar-Morgenstern-Platz 1, 1090 Wien, Austria \\ Email: moritz.jirak@univie.ac.at}  \and Alois Kneip\\ \vspace*{-.3cm}{\footnotesize Fachbereich Wirtschaftswissenschaften, Universit\"at Bonn, Adenauerallee 24 -- 42, 53113 Bonn, Germany  \\ Email: akneip@uni-bonn.de} \and Alexander Meister\\ \vspace*{-.3cm}{\footnotesize Institut f\"ur Mathematik, Universit\"at Rostock, Ulmenstra{\ss}e 69, 18057 Rostock, Germany  \\ Email: alexander.meister@uni-rostock.de} \and Mario Pahl\\ \vspace*{-.3cm}{\footnotesize Institut f\"ur Mathematik, Universit\"at Rostock, Ulmenstra{\ss}e 69, 18057 Rostock, Germany  \\ Email: mario.pahl@uni-rostock.de}}
\date{}                                           
\begin{document}

\maketitle

\begin{abstract}
We consider nonparametric regression with functional covariates, that is, they are elements of an infinite-dimensional Hilbert space. A locally polynomial estimator is constructed, where an orthonormal basis and various tuning parameters remain to be selected. We provide a general asymptotic upper bound on the estimation error and show that this procedure achieves polynomial convergence rates under appropriate tuning and supersmoothness of the regression function. Such polynomial convergence rates have usually been considered to be non-attainable in nonparametric functional regression without any additional strong structural constraints such as linearity of the regression function.
\end{abstract}

\vspace{1cm} 

\noindent {\it Keywords}: functional data analysis; kernel estimation; nonparametric regression; polynomial convergence rates. \\[5mm]
{\it MSC2020 classification}: 62R10; 62G08.

\section{Introduction}

We consider the nonparametric regression model where one observes the i.i.d.~data $(X_j,Y_j)$, $j=1,\ldots,n$, given as
\begin{equation} \label{eq:model} Y_j \, = \, g(X_j) + \varepsilon_j\,. \end{equation}
The covariates $X_j$ are functional random variables and take on their values in the Hilbert space $L_2([0,1])$, which is endowed with its Borel $\sigma$-algebra. The regression errors $\varepsilon_j$ are conditionally centered given $X_j$ and have a finite conditional variance given $X_j$, which is bounded from above by some deterministic constant $\sigma^2>0$. The goal is to estimate the smooth regression function $g$, which maps from $L_2([0,1])$ to $\mathbb{R}$. 

A famous simplification of this model is functional linear regression where $g$ is imposed to be linear, i.e.~$g(x) = \langle \theta , x\rangle$ with $L_2([0,1])$-inner product $\langle\cdot,\cdot\rangle$ and  target function $\theta \in L_2([0,1])$. Then the statistical problem reduces to the estimation of $\theta$. The model has been widely studied, e.g.~for nonparametric estimation of $\theta$ with optimal convergence rates, we refer to \cite{HH07}, to \cite{CJ12} for adaptive estimation, to \cite{CH06} for optimal prediction, to \cite{M11} for asymptotic equivalence to a white noise inverse problem and to \cite{CMS07} for central limit theorems. Despite the linear structure, polynomial rates slower than the standard parametric rate occur since the covariates take their values in an infinite-dimensional function space.  

For approaches beyond linearity, we mention quadratic functional regression, see \cite{YM10} and fully nonparametric functional regression models (see e.g.~the book of \cite{FV06}, the review paper of \cite{LV18} and the references therein). While convergence rates are usually not provided explicitly, polynomial rates have not been attained in the literature so far. The most commonly used estimation method is the functional data version of the kernel regression estimator (Nadaraya-Watson estimator). Besides, there are extensions from this locally constant estimator to locally linear procedures, see e.g.~\cite{BG09}, \cite{BFV10} and \cite{BEM11}.

In the current work, we introduce a locally polynomial estimator for an arbitrary degree. It generalizes the standard local polynomial regression estimator (see e.g.~\cite{FG96}) to treat functional covariates. We mention approaches to nonparametric density estimation for functional data, i.e.~the wavelet-based method of \cite{CKM13} and orthogonal series and kernel estimators by \cite{DN04a} and \cite{DN04b} for specific diffusion processes. While upper bounds on the convergence rates are provided under quite abstract conditions in these papers, \cite{DM21} study artificially contaminated functional data with a Wiener density and attain polynomial convergence rates by series estimators. Under finite smoothness levels of the target function in functional data analysis, only logarithmic or at least sub-polynomial convergence rates are achieved (for lower bounds see \cite{M12},\cite{CR14} and \cite{M16}). However, the infinite dimensional Gaussian mixtures considered in \cite{DM21} are supersmooth in a sense that the functional densities are infinitely-fold differentiable. In the current paper we show that, for functional regression functions under more general supersmoothness constraints (apart from Gaussian mixtures), polynomial convergence rates can be attained by our locally polynomial estimator under specific selection of an orthonormal basis and two tuning parameters denoted by $J$ and $K$. 

In Section \ref{Meth}, we introduce our estimation procedure, in Section \ref{GAUB} we deduce a general asymptotic upper bound on the pointwise estimation error of our method in Theorem \ref{T:1}. This results is used in Section \ref{PCR} in order to derive polynomial convergence rates. The proofs are deferred to Section \ref{Proofs}.

\section{Methodology} \label{Meth}

Let us assume that $g$ is $K$-fold Fr{\'e}chet differentiable in some neighbourhood ${\cal N} := {\cal N}(x,\delta)$ of $x \in L_2([0,1])$ with radius $\delta>0$ where $x$ is the site at which $g$ should be estimated. The $K$th order Fr{\'e}chet derivative is supposed to be continuous on ${\cal N}$. We define
$$ \tilde{g}_j(\lambda) \, := \, g\big(\lambda X_j + (1-\lambda) x \big)\,. $$
Under the stipulation that $X_j \in {\cal N}$, we apply Taylor approximation to $\tilde{g}$ around $\lambda=0$, which yields that
\begin{align} \nonumber 
 g(X_j) & \, = \, \tilde{g}_j(1) = \sum_{k=0}^{K-1} \frac1{k!} \tilde{g}_j^{(k)}(0) \, + \, {\cal R}_S^{(K)}(x,X_j) \\ \label{eq:dimapp} & \, = \, \sum_{k=0}^{K-1} \frac1{k!} g^{(k)}(x;X_j-x,\ldots,X_j-x) \, + \, {\cal R}_S^{(K)}(x,X_j)\,, \end{align}
where $g^{(k)}(x;\cdots)$ denotes the $k$th order Fr{\'e}chet derivative of $g$ at $x$, which is viewed as a symmetric multilinear map of degree $k$; and the remainder term satisfies
\begin{equation} \label{eq:remain} \big|{\cal R}_S^{(K)}(x,X_j)\big| \, \leq \, \frac1{K!} \cdot \sup_{y \in {\cal N}} \big\|g^{(K)}(y;\cdot)\big\| \cdot \delta^K\,, \end{equation}
with
\begin{equation} \label{eq:gnorm} \big\|g^{(K)}(y;\cdot)\big\| \, := \, \sup \big\{ \big|g^{(K)}(y;u_1,\ldots,u_K)\big| \, : \, \|u_1\| = \cdots = \|u_K\| = 1\big\}\,. \end{equation}
Let $\{\varphi_j\}_{j\in \mathbb{N}}$ be some orthonormal basis of $L_2([0,1])$, which is considered as deterministic at this stage. We replace $X_j-x$ by its finite-dimensional proxy $\sum_{l=1}^J \langle X_j-x,\varphi_l\rangle \varphi_l$, for some integer $J>0$, where $\langle\cdot,\cdot\rangle$ stands for the $L_2([0,1])$-inner product, so that
\begin{align}  \label{eq:app}
 g(X_j) & =  {\cal P}_{J,K}\big(\langle X_j-x,\varphi_1\rangle , \ldots,  \langle X_j-x,\varphi_J\rangle \big)   +  {\cal R}_D^{(K,J)}(x,X_j) +  {\cal R}_S^{(K)}(x,X_j)\,,
 \end{align}
where 
\begin{align} \nonumber {\cal P}_{J,K} \big(\langle X_j-x,\varphi_1\rangle , \ldots,   \langle X_j-x,&\varphi_J\rangle \big) \\ \nonumber & =  \sum_{k=0}^{K-1} \frac1{k!} \sum_{l_1=1}^{J} \cdots \sum_{l_k=1}^{J} g^{(k)}(x;\varphi_{l_1},\ldots,\varphi_{l_k}) \cdot \prod_{q=1}^k \langle X_j-x,\varphi_{l_q}\rangle \label{eq:tayapp}\\
&  =  \sum_{k\in {\cal K}} \xi_{j;{\bf k}} \cdot G_{\bf k}(x)\,,
\end{align}
with the notation
\begin{align} \nonumber
{\cal K} & \, := \, \big\{{\bf k}=(k_1,\ldots,k_J) \in \mathbb{N}_0^J \, : \, k_1+\cdots+k_J \leq K-1\big\}\,, \\ \nonumber
\xi_{j;{\bf k}} & \, := \, \prod_{l=1}^J \langle X_j-x,\varphi_l\rangle^{k_l}\,, \\ \label{eq:not}
G_{\bf k}(x) & \, := \, g^{(|{\bf k}|)}(x;\underbrace{\varphi_1,\ldots,\varphi_1}_{k_1-\mbox{fold}},\ldots,\underbrace{\varphi_J,\ldots,\varphi_J}_{k_J-\mbox{fold}}) \, \big/ \, \prod_{j=1}^J k_j!\,,
\end{align}
where $|{\bf k}| = \sum_{j=1}^J k_j$. Note that ${\cal P}_{J,K}$ forms a $J$-variate polynomial of degree $\leq K-1$ and that ${\cal R}_D^{(K,J)}(x,X_j)$ represents the error coming from the finite-dimensional approximation. It can be bounded from above as follows
\begin{lem} \label{L:bounddim}
On the event $X_j\in {\cal N}$ it holds that 
$$ \big|{\cal R}_D^{(K,J)}(x,X_j)\big| \, \leq \, \sum_{k=1}^{K-1} \frac{\delta^{k-1}}{(k-1)!} \cdot \big\|g^{(k)}(x;\cdot)\big\| \cdot \Big\{\sum_{l>J} \langle X_j-x,\varphi_l\rangle^2\Big\}^{1/2}\,. $$ 
\end{lem}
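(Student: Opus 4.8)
The plan is to recognise $\mathcal{R}_D^{(K,J)}(x,X_j)$ as the discrepancy between the Taylor polynomial of \eqref{eq:dimapp} evaluated at the full increment $u := X_j-x$ and the same polynomial evaluated at its truncated projection $v := \sum_{l=1}^J \langle X_j-x,\varphi_l\rangle\,\varphi_l$. Indeed, multilinearity of $g^{(k)}(x;\cdot)$ lets me collapse the nested sums defining $\mathcal{P}_{J,K}$ in \eqref{eq:tayapp} back into $\sum_{k=0}^{K-1}\frac1{k!}\,g^{(k)}(x;v,\ldots,v)$, so that subtracting \eqref{eq:tayapp} from \eqref{eq:dimapp} gives
$$ \mathcal{R}_D^{(K,J)}(x,X_j) \, = \, \sum_{k=1}^{K-1} \frac1{k!}\Bigl[ g^{(k)}(x;\underbrace{u,\ldots,u}_{k}) - g^{(k)}(x;\underbrace{v,\ldots,v}_{k}) \Bigr], $$
where the $k=0$ contribution cancels since both terms equal $g(x)$.

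Next I would control each bracketed difference by a telescoping argument. Setting $a_i := g^{(k)}(x;\underbrace{v,\ldots,v}_{i},\underbrace{u,\ldots,u}_{k-i})$ for $0\le i\le k$, multilinearity in the $i$-th slot yields $a_{i-1}-a_i = g^{(k)}(x;\underbrace{v,\ldots,v}_{i-1},u-v,\underbrace{u,\ldots,u}_{k-i})$, and hence
$$ g^{(k)}(x;u,\ldots,u) - g^{(k)}(x;v,\ldots,v) \, = \, \sum_{i=1}^{k} g^{(k)}\bigl(x;\underbrace{v,\ldots,v}_{i-1},u-v,\underbrace{u,\ldots,u}_{k-i}\bigr). $$

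To each summand I would apply the operator-norm bound $\bigl|g^{(k)}(x;w_1,\ldots,w_k)\bigr| \le \bigl\|g^{(k)}(x;\cdot)\bigr\|\prod_{m=1}^k\|w_m\|$ implied by \eqref{eq:gnorm}. Two elementary geometric facts then finish the estimate: since the orthogonal projection onto $\mathrm{span}\{\varphi_1,\ldots,\varphi_J\}$ is a contraction and $X_j\in\mathcal{N}$ forces $\|u\|=\|X_j-x\|\le\delta$, we have $\|v\|\le\|u\|\le\delta$; and by Parseval applied to the orthogonal complement, $\|u-v\| = \bigl\{\sum_{l>J}\langle X_j-x,\varphi_l\rangle^2\bigr\}^{1/2}$. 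Consequently each of the $k$ summands is at most $\bigl\|g^{(k)}(x;\cdot)\bigr\|\,\delta^{k-1}\,\bigl\{\sum_{l>J}\langle X_j-x,\varphi_l\rangle^2\bigr\}^{1/2}$, so the bracket is bounded by $k$ times this quantity. Substituting into the displayed series and simplifying $k/k! = 1/(k-1)!$ produces exactly the asserted bound.

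The argument is essentially mechanical, and the supersmoothness hypotheses enter only through the finiteness of $\bigl\|g^{(k)}(x;\cdot)\bigr\|$ on $\mathcal{N}$, which holds by assumption. I do not anticipate a genuine obstacle; the sole point requiring care is the bookkeeping in the telescoping step, where one must keep the single $u-v$ factor in a fixed slot while replacing the remaining arguments in the correct order. The telescoping rests on multilinearity alone, so symmetry of $g^{(k)}(x;\cdot)$ is not needed, though it may be invoked to reorder arguments if convenient.
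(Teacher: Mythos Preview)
Your proposal is correct and follows essentially the same route as the paper: identify $\mathcal{R}_D^{(K,J)}$ as the difference of the $k$th Fr\'echet derivatives evaluated at $u=X_j-x$ versus its projection $v$, telescope that difference one slot at a time, and bound each of the $k$ resulting terms by $\|g^{(k)}(x;\cdot)\|\,\delta^{k-1}\,\|u-v\|$ using $\|v\|\le\|u\|\le\delta$ and Parseval. The paper's proof is the same argument with the notation $X_j^{*,[J]}$ and $X_j^{*,\bot}$ in place of your $v$ and $u-v$.
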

As the estimation procedure we suggest to consider that polynomial $\hat{{\cal P}}_{J,K}$ which minimizes the contrast functional
$$ \tilde{{\cal P}}_{J,K} \, \mapsto \, \sum_{j=1}^n \big|Y_j - \tilde{{\cal P}}_{J,K}\big(\langle X_j-x,\varphi_1\rangle , \ldots,  \langle X_j-x,\varphi_J\rangle \big)\big|^2 \cdot 1_{{\cal N}}(X_j)\,, $$
among all $J$-variate polynomials $\tilde{{\cal P}}_{J,K}$ of the degree $\leq K-1$. Following the standard strategy in local polynomial regression, the constant coefficient of $\hat{{\cal P}}_{J,K}$ can be employed to estimate $g(x)$.
The scheme is formalized as follows: Write $\|z\|_{\cal N}^2 := \sum_{j=1}^n z_j^2 \cdot 1_{{\cal N}}(X_j)$, $Y := (Y_1,\ldots,Y_n)^\dagger$ and
$$ \tilde{{\cal P}}_{J,K}\big(\langle X_j-x,\varphi_1\rangle , \ldots,  \langle X_j-x,\varphi_J\rangle\big)  = \sum_{{\bf k} \in {\cal K}}\, \alpha_{{\bf k}} \cdot \xi_{j;{\bf k}}\,. $$
Then the goal is to select $\alpha = (\alpha_{{\bf k}})_{{\bf k}}$ such that $\|Y - \xi \alpha\|_{{\cal N}}^2$ with $\xi:=\{\xi_{j,{\bf k}}\}_{j=1,\ldots,n;{\bf k}\in {\cal K}}$ is minimized. By usual least-square arguments, we deduce that $\hat{\alpha}$ is a minimizing vector if it satisfies the following system of linear equations
$$ {\cal M} \hat{\alpha} \, = \, {\cal Y}\,, $$
where 
\begin{align*} 
{\cal Y} & \, = \, \Big(\sum_{j=1}^n 1_{\cal N}(X_j) \xi_{j;{\bf k}}\cdot Y_j\Big)_{{\bf k}\in {\cal K}}\,, \\ 
{\cal M} & \, = \, \Big(\sum_{j=1}^n 1_{\cal N}(X_j)\cdot \xi_{j;{\bf k}}\cdot \xi_{j;{\bf k}'}\Big)_{{\bf k},{\bf k}'\in {\cal K}}\,. \end{align*}
This motivates the definition of the functional local polynomial estimator
\begin{equation} \label{eq:hatg}
\hat{g}(x) \, = \, {\bf e}_0^\dagger ({\cal M} + {\cal S})^{-1} {\cal Y}\,,
\end{equation}
for $j$th unit vector ${\bf e}_j$ and the diagonal ${\cal K}\times {\cal K}$-matrix ${\cal S}$ whose $({\bf k},{\bf k})$th entry equals $1/{|{\bf k}| \choose k_1,\ldots,k_J}$. This Tikhonov regularization guarantees invertibility of ${\cal M}+{\cal S}$ as ${\cal M}$ is symmetric and positive semi-definite. 

\section{General Asymptotic Upper Bound} \label{GAUB}

The error of the estimator (\ref{eq:hatg}) is decomposed as follows
\begin{equation} \label{eq:errt}
\hat{g}(x) - g(x) \, = \, {\cal B}_1 \, + \, {\cal B}_2 \, + \, {\cal B}_3 \, + \, {\cal V}\,,
\end{equation}
where 
\begin{align*}
{\cal B}_1 & \, := \, - {\bf e}_0^\dagger ({\cal M}+{\cal S})^{-1}\, {\cal S}\, {\bf G}\,, \\ 
{\cal B}_2 & \, := \, {\bf e}_0^\dagger ({\cal M}+{\cal S})^{-1}\, \sum_{j=1}^n 1_{\cal N}(X_j) \cdot \xi_{j;\bullet} \cdot {\cal R}_D^{(K,J)}(x,X_j)\,, \\
{\cal B}_3 & \, := \,  {\bf e}_0^\dagger ({\cal M}+{\cal S})^{-1}\, \sum_{j=1}^n 1_{\cal N}(X_j) \cdot \xi_{j;\bullet} \cdot {\cal R}_S^{(K)}(x,X_j)\,, \\
{\cal V} & \, := \,  {\bf e}_0^\dagger ({\cal M}+{\cal S})^{-1}\, \sum_{j=1}^n 1_{\cal N}(X_j) \cdot \xi_{j;\bullet} \cdot \varepsilon_j\,,
\end{align*}
since $G_0 = g(x)$ holds true where ${\bf G} := (G_{\bf k}(x))_{{\bf k}\in {\cal K}}$. Note that $\xi_{j;\bullet}$ is short for $(\xi_{j;{\bf k}})_{{\bf k} \in {\cal K}}$. The term ${\cal V}$ is conditionally centered given $X_1,\ldots,X_n$ and its conditional variance has the upper bound 
\begin{align} \label{eq:V}
\sigma^2 \cdot {\bf e}_0^\dagger ({\cal M}+{\cal S})^{-1} {\cal M} ({\cal M}+{\cal S})^{-1} {\bf e}_0 & \, \leq \, \sigma^2 \cdot {\bf e}_0^\dagger ({\cal M}+{\cal S})^{-1} {\bf e}_0\,,
\end{align}
as the matrix ${\cal M}$ is symmetric. First, in order to evaluate the right side in (\ref{eq:V}), we replace the matrix ${\cal M}$ by its expected value (rescaled by $1/n$) and remove the ridge regularization. Writing ${\cal M}_n := E {\cal M} / n \, = \, \big(E 1_{{\cal N}}(X_1) \cdot \xi_{1;{\bf k}} \cdot \xi_{1;{\bf k}'}\big)_{{\bf k},{\bf k}' \in {\cal K}}$, we study the term
\begin{equation} \label{eq:u0} u_0 = {\bf e}_0^\dagger {\cal M}_n^{-1} {\bf e}_0\,,  \end{equation}
where $u := (u_{{\bf k}})_{{\bf k} \in {\cal K}}$ satisfies ${\cal M}_n u = {\bf e}_0$. For some arbitrary norm $\|\cdot\|_\lambda$ on $\mathbb{C}^J$ we write $X_{1,[J]}^* := \big(\langle X_1^*,\varphi_j\rangle\big)_{j\leq J}$ where $X_1^* := X_1-x$ and $X_{1,[J]}^{*,1} := X_{1,[J]}^* / \|X_{1,[J]}^*\|_\lambda$. Moreover we define
$$ \Phi_J^* := P\big[X_1\in{\cal N} \mid X_{1,[J]}^{*,1}\big]\,, $$
and the random probability measure $P_{J,\delta}\big(\cdot;X_{1,[J]}^{*,1}\big)$ by
$$ P_{J,\delta}\big(A;X_{1,[J]}^{*,1}\big) \, := \, E\big( 1_{{\cal N}}(X_1) \cdot 1_A(\|X_{1,[J]}^*\|_\lambda) \mid X_{1,[J]}^{*,1}\big) \, / \, \Phi_J^* \,, $$
for all Borel sets $A \subseteq \mathbb{R}$. We impose that, on the event $\{\Phi_J^*>0\}$, the measure $P_{J,\delta}\big(\cdot;X_{1,[J]}^{*,1}\big)$ has a Lebesgue density $p_{J,\delta}\big(\cdot;X_{1,[J]}^{*,1}\big)$ which satisfies
\begin{equation} \label{eq:pJ}
P\Big[X_1 \in {\cal N}, \, \sup_{t>0} \, t \cdot p_{J,\delta}\big(t;X_{1,[J]}^{*,1}\big) \, \leq \, c_1 \cdot J\Big] \, \geq \, P[X_1 \in {\cal N}] \, / \, c_2\,,
\end{equation}
for some deterministic positive constants $c_1$ and $c_2$ which do not depend on $\delta$, $n$ or $J$. In the following lemma an upper bound on $u_0$ is provided. 
\begin{lem} \label{L:u0}
Grant the assumption (\ref{eq:pJ}). Then, for $J\geq 2$, 
$$ u_0 \, \leq \, c_2 \cdot \exp\big(8 c_1 \cdot (K-1)\big) \cdot J^{(8c_1+2)\cdot (K-1)} \, / \, P[X_1\in {\cal N}]\,.$$
\end{lem}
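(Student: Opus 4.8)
The plan is to identify $u_0$ with the reciprocal of a Christoffel-type function and then to bound that function from below. Writing a generic coefficient vector as $v=(v_{\bf k})_{{\bf k}\in\mathcal{K}}$ and the associated $J$-variate polynomial $Q(z)=\sum_{{\bf k}\in\mathcal{K}} v_{\bf k}\prod_{l=1}^J z_l^{k_l}$ of degree $\le K-1$, the quadratic form reads $v^\dagger\mathcal{M}_n v = E\big[1_{\mathcal{N}}(X_1)\,Q(X_{1,[J]}^*)^2\big]$, and the constraint $v_0=1$ is exactly $Q(0)=1$. By the Schur-complement identity $\mathbf{e}_0^\dagger A^{-1}\mathbf{e}_0=(\min_{v_0=1}v^\dagger A v)^{-1}$ for symmetric positive definite $A$, this yields
\[
u_0 \;=\; \Big(\min_{Q:\,\deg Q\le K-1,\ Q(0)=1} E\big[1_{\mathcal{N}}(X_1)\,Q(X_{1,[J]}^*)^2\big]\Big)^{-1}.
\]
Hence the task reduces to a \emph{lower} bound on the minimal weighted mean square of degree-$(K-1)$ polynomials normalized to take the value $1$ at the origin.

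Second, I would reduce this multivariate minimization to a univariate polynomial estimate by conditioning on the direction $\omega:=X_{1,[J]}^{*,1}$. For fixed $\omega$ the restriction $t\mapsto Q(t\,\omega)$ is a univariate polynomial $q_\omega$ of degree $\le K-1$ with $q_\omega(0)=Q(0)=1$, and by the definition of $P_{J,\delta}$ one has $E[1_{\mathcal{N}}(X_1)Q^2\mid\omega]=\Phi_J^*\int_0^\infty q_\omega(t)^2\,p_{J,\delta}(t;\omega)\,dt$. Restricting the outer expectation to the $\omega$-measurable event $\mathcal{G}:=\{\sup_{t>0}t\,p_{J,\delta}(t;\omega)\le c_1 J\}$, using $E[1_{\mathcal{G}}\Phi_J^*]=P[\mathcal{G},\,X_1\in\mathcal{N}]\ge P[X_1\in\mathcal{N}]/c_2$ from (\ref{eq:pJ}), and inserting a \emph{uniform} univariate lower bound $\int_0^\infty q^2 p\ge\ell$ valid whenever $t\,p(t)\le c_1 J$, $\int p=1$, $q(0)=1$ and $\deg q\le K-1$, I obtain $\min_Q E[1_{\mathcal{N}}Q^2]\ge\ell\cdot P[X_1\in\mathcal{N}]/c_2$, and hence $u_0\le c_2/(\ell\,P[X_1\in\mathcal{N}])$.

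Third, and this is the crux, one must prove the univariate estimate: for a probability density $p$ on $(0,\infty)$ with $t\,p(t)\le M:=c_1 J$ and a real polynomial $q$ of degree $\le K-1$ with $q(0)=1$, bound $\int_0^\infty q^2 p\,dt$ from below. I would argue by anti-concentration: since $\int_0^\infty q^2 p\ge\eta^2\big(1-P_p[|q|<\eta]\big)$ and $p\le M/t$, the probability $P_p[|q|<\eta]$ is at most $M$ times the \emph{logarithmic} measure $\int_{\{|q|<\eta\}}dt/t$ of the sublevel set. Factoring $q(t)=\prod_i(1-t/r_i)$ (so that $q(0)=1$ automatically) and using $\{|q|<\eta\}\subseteq\bigcup_i\{|1-t/r_i|<\eta^{1/(K-1)}\}$, a Remez/Cartan-type estimate bounds the logarithmic measure contributed by each real positive root by $\approx 2\eta^{1/(K-1)}$; choosing $\eta$ so that $M$ times the total is $\le 1/2$ yields $\ell\gtrsim (c\,M\,K)^{-2(K-1)}$, i.e.\ a bound of the form $J^{-2(K-1)}$ up to degree-dependent constants, which after substitution into the second step produces an estimate of the claimed shape.

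The main obstacle is to make this sublevel-set bound uniform over all admissible root configurations. For negative or purely imaginary roots the sublevel set is empty, and for real (or nearly real, positive-real-part) roots the multiplicative neighbourhood has logarithmic measure $O(\eta^{1/(K-1)})$; the delicate case is that of complex roots with small or non-positive real part, where the naive logarithmic-measure estimate can diverge near the origin. Controlling these configurations uniformly — presumably by a dyadic decomposition of $(0,\infty)$ into scales $[2^m,2^{m+1})$, on each of which the mass is at most $c_1 J\log 2$, together with a count of the scales on which $q$ can be small — is what forces the cruder, $c_1$-dependent loss and accounts for the exponent $(8c_1+2)(K-1)$ and the factor $\exp(8c_1(K-1))$ in the stated bound.
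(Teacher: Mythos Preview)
Your first two reductions are exactly what the paper does: the Christoffel-function identity $u_0=\big(\inf_{Q(0)=1}E[1_{\mathcal N}(X_1)Q(X_{1,[J]}^*)^2]\big)^{-1}$, the conditioning on the direction $\omega=X_{1,[J]}^{*,1}$ to obtain a univariate problem, and the restriction to the good event $\mathfrak E(J,\delta)=\{\sup_t t\,p_{J,\delta}(t;\omega)\le c_1J\}$ via (\ref{eq:pJ}). So far your plan and the paper coincide.

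The gap is precisely the step you flag as ``the main obstacle''. Your sublevel-set/Remez argument is left unfinished, and the difficulty you describe with complex roots of non-positive real part is real: as it stands, the logarithmic measure $\int_{\{|q|<\eta\}}dt/t$ can indeed blow up, and the dyadic sketch is not an argument. The paper bypasses this entirely by two moves you are missing. First, for any complex root $\zeta$ and any $t\ge 0$ one has the elementary inequality
\[
|1-t/\zeta|^2 \;=\; 1-2(t/|\zeta|)\cos(\arg\zeta)+(t/|\zeta|)^2 \;\ge\; (1-t/|\zeta|)^2,
\]
so that $|q_\omega(t)|^2\ge\prod_{k=1}^{K^*}|1-t/|\zeta_k||^2$. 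This single line eliminates the complex-root issue: one may work as if all roots were positive reals $r_k=|\zeta_k|$. Second, instead of a sublevel-set bound the paper applies Jensen's inequality to the probability measure $p_{J,\delta}(\cdot;\omega)$, converting the integrated product into an exponentiated sum:
\[
\int \prod_k |1-y/r_k|^2\,p_{J,\delta}(y;\omega)\,dy \;\ge\; \exp\Big(2\sum_k\int \log|1-y/r_k|\,p_{J,\delta}(y;\omega)\,dy\Big),
\]
so it suffices to bound $\inf_{r>0}\int\log|1-y/r|\,p_{J,\delta}(y;\omega)\,dy$ from below on $\mathfrak E(J,\delta)$. This is done directly: split the $y$-integral into $|y-r|\ge r/J$ (where $\log|1-y/r|\ge -\log J$) and $|y-r|<r/J$; on the latter piece use $y\,p_{J,\delta}(y;\omega)\le c_1J$ together with $y\ge r(1-1/J)$ to get a contribution $\ge -4c_1-4c_1\log J$. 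Summing gives $-4c_1-(4c_1+1)\log J$ per root, hence the factor $\exp(8c_1(K-1))\,J^{(8c_1+2)(K-1)}$.

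In short: your approach is not wrong in spirit, but the missing idea is the one-line inequality $|1-t/\zeta|\ge|1-t/|\zeta||$ for $t\ge0$; once you have it, Jensen plus a direct $\log$-integral estimate is far cleaner than any Remez/Cartan machinery and yields the stated constants without further work.
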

Note that, for small $\delta>0$, the term $P[X_1\in {\cal N}]$ is often referred to as small ball probability. We present a key example, which illustrates the constraints of Lemma \ref{L:u0}.
\begin{ex} \label{ex1} {\bf(Gaussian functional covariates) }We consider an example of a functional covariate $X_1$. We apply the approach
\begin{equation} \label{eq:Gauss0} X_1 \, = \, y + \sum_{j=1}^\infty \lambda_j^{1/2} \eta_j \varphi_j\,, \end{equation}
where $y \in L_2([0,1])$ and the sequence $(\lambda_j)_j \downarrow 0$ with $\sum_j \lambda_j < \infty$ are deterministic; and the $\eta_j$ are i.i.d. Gaussian random variables with mean $0$ and variance $1$. Thus,
$$ X_1^* \, = \, z + \sum_{j=1}^\infty \lambda_j^{1/2} \eta_j \varphi_j\,, $$
where $z := y-x$. 



We write ${\bf \eta} := (\eta_j)_{j\leq J}$, ${\bf z} := (\langle z,\varphi_j \rangle)_{j\leq J}$ and $\Lambda$ for the $J\times J$-diagonal matrix which contains $\lambda_j$ as its $(j,j)$th entry. Then, $X_{1,[J]}^* = \Lambda^{1/2} {\bf \eta} + {\bf z}$. Note that $(\varphi_j,\lambda_j)$, $j\geq 1$, form the principal components of $X_1$ and $X_1^*$.
\end{ex}
In the following proposition we show that the assumption of Lemma \ref{L:u0} is satisfied by the functional covariate (\ref{eq:Gauss0}) from Example \ref{ex1}.
 \begin{prop} \label{P:1}
Put $\|{\bf v}\|_\lambda := |\Lambda^{-1/2}{\bf v}|$, for all ${\bf v} \in \mathbb{C}^J$, where $|\cdot|$ denotes the Euclidean norm. Then the functional random variable $X_1$ in (\ref{eq:Gauss0}) satisfies the condition (\ref{eq:pJ}) for $c_1=1+2c_2^*$ and $c_2 = 1$ under the constraint 
\begin{align} 
\label{eq:prop2} & \sup_j \, \langle z,\varphi_j \rangle^2 / \lambda_j \, \leq \, c_2^* \cdot \delta^2\,,
\end{align}
for some constant $c_2^*>0$.
\end{prop}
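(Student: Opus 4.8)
The plan is to reduce the condition (\ref{eq:pJ}) to a one-dimensional estimate on a conditional density and then to bound that density by locating its maximum.

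First I would pass to the coordinates induced by the principal components. Writing $V := \boldsymbol{\eta} + \mathbf{w}$ with $\mathbf{w} := \Lambda^{-1/2}\mathbf{z}$, the choice $\|\mathbf{v}\|_\lambda = |\Lambda^{-1/2}\mathbf{v}|$ gives $\|X_{1,[J]}^*\|_\lambda = |V|$ and $X_{1,[J]}^{*,1} = \Lambda^{1/2}V/|V|$, so conditioning on $X_{1,[J]}^{*,1}$ is the same as conditioning on the Euclidean direction $\Theta := V/|V|$ of the $N(\mathbf{w},I_J)$-vector $V$. Setting $R := |V|$ and $a(\theta) := |\Lambda^{1/2}\theta|^2 = \sum_{l\le J}\lambda_l\theta_l^2$, the event $\{X_1\in{\cal N}\}$ reads $\{R^2 a(\Theta) + T_J < \delta^2\}$, where $T_J := \sum_{l>J}\langle X_1^*,\varphi_l\rangle^2$ depends only on $\eta_{J+1},\eta_{J+2},\dots$ and is therefore independent of $(R,\Theta)$.

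Next I would compute the relevant density. In polar coordinates the conditional law of $R$ given $\Theta=\theta$ has Lebesgue density proportional to $q(r) := r^{J-1}e^{-r^2/2+br}$ with $b := \langle\theta,\mathbf{w}\rangle$, and the tilting by $1_{\cal N}(X_1)$ multiplies this by the factor $\psi(r) := P(T_J < \delta^2 - r^2 a(\theta))$ before renormalising, so $p_{J,\delta}(r;\theta) \propto q(r)\psi(r)$. Since $\psi$ is nonincreasing in $r$, for every $r_0$ one has $\int_0^\infty q\psi \ge \psi(r_0)\int_0^{r_0}q$, whence $r_0\,p_{J,\delta}(r_0;\theta) \le F(r_0)$ with $F(r) := r\,q(r)\big/\int_0^r q(s)\,ds$. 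The ball constraint $\psi$ has thereby been removed, and it remains to bound $\sup_{r>0}F(r)$.

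The key step is the estimate $\sup_{r>0}F(r) \le J + b^2/4$. Here $F$ is continuous on $(0,\infty)$ with $F(0^+)=J$ and $F(\infty)=0$, so any value exceeding $J$ must be attained at an interior critical point; differentiating and using $\tfrac{d}{dr}\log q(r) = (J-1)/r - r + b$ shows that at such a point $F(r) = 1 + r\,\tfrac{d}{dr}\log q(r) = J - r(r-b) \le J + b^2/4$. I expect this maximisation to be the main obstacle: the crude pointwise bounds $F \le J\,e^{b^2/2}$ (obtained by estimating the exponent of $q$ from below) are far too weak once $b$ grows with $J$, and the gain comes precisely from the stationarity identity $F = J - r(r-b)$, which replaces the exponential loss by an additive one. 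Finally I would insert the hypothesis: by Cauchy--Schwarz $b^2 \le |\mathbf{w}|^2 = \sum_{l\le J}\langle z,\varphi_l\rangle^2/\lambda_l$, which by (\ref{eq:prop2}) is at most $J c_2^*\delta^2$, so that $\sup_{t>0}t\,p_{J,\delta}(t;\theta) \le J(1+c_2^*\delta^2/4) \le (1+2c_2^*)J$ uniformly in $\theta$ (the last inequality for $\delta^2\le 8$, harmless since $\delta$ is a small radius). As this bound holds for every direction, the inner event in (\ref{eq:pJ}) holds almost surely on $\{X_1\in{\cal N}\}$, giving (\ref{eq:pJ}) with $c_1=1+2c_2^*$ and $c_2=1$.
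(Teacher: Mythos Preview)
Your argument is correct and shares its overall architecture with the paper's proof: both identify the conditional density of $R=\|X_{1,[J]}^*\|_\lambda$ given the direction, use the monotonicity of the ball-probability factor $\psi$ to reduce to bounding
\[
F(t)\;=\;\frac{t^{J}\,e^{-(t-b)^2/2}}{\displaystyle\int_0^{t}s^{J-1}e^{-(s-b)^2/2}\,ds},\qquad b=\mathbf{z}^\dagger\Lambda^{-1}X_{1,[J]}^{*,1},
\]
and then control $b$ via $|b|\le|\Lambda^{-1/2}\mathbf{z}|\le\sqrt{c_2^*J}\,\delta$.

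Where the two proofs differ is precisely at the ``key step''. The paper rewrites $F(t)$ as $J/\int_0^1\exp\{(1-s^{2/J})t^2/2-(1-s^{1/J})tb\}\,ds$ and then splits into two regimes: for small $t$ it drops the quadratic term and uses $1-s^{1/J}\le J^{-1}\log(1/s)$ to compare with $\int_0^1 s^{2c_2^*}\,ds$, while for large $t$ it shows the full exponent is nonnegative. Your stationarity identity $F(r)=1+r\,(\log q)'(r)=J-r(r-b)$ at critical points is a genuinely different and more economical device: it bypasses the substitution and the case distinction, and yields the sharper intermediate bound $\sup_{r>0}F(r)\le J+b^2/4\le J(1+c_2^*\delta^2/4)$ in one stroke, from which the stated constant $c_1=1+2c_2^*$ follows with room to spare for $\delta<1$. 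The paper's route has the minor advantage of not appealing to differentiability of $F$, but your calculus argument is elementary and the required smoothness of $q$ is evident.
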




Next, we provide an upper bound on the expectation of the right side of (\ref{eq:V}) where we use Lemma \ref{L:u0} and the random proximity between the matrix ${\cal M}/n$ and its expected value ${\cal M}_n$ for large $n$.
 \begin{lem} \label{L:V}
For $\delta \in (0,1)$, it holds that
$$ E \, {\bf e}_0^\dagger ({\cal M} + {\cal S})^{-1} {\bf e}_0 \, \leq \, \frac{2-\delta^2}{1-\delta^2} \cdot  n^{-1} \cdot u_0\,, $$
with $u_0$ as in (\ref{eq:u0}).
\end{lem}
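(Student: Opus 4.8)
The plan is to add the $n$ data points one at a time and track the scalar
${\bf e}_0^\dagger B_m^{-1}{\bf e}_0=:g_m$, where $B_m := {\cal S} + \sum_{j=1}^m 1_{\cal N}(X_j)\,\xi_{j;\bullet}\xi_{j;\bullet}^\dagger$, so that $B_n = {\cal M}+{\cal S}$ and $B_0={\cal S}$. Since the diagonal entry of ${\cal S}$ at ${\bf k}={\bf 0}$ equals $\binom{0}{0,\ldots,0}^{-1}=1$, the starting value is $g_0=1$, and the quantity to be bounded is $E\,g_n$. Each increment $1_{\cal N}(X_m)\,\xi_{m;\bullet}\xi_{m;\bullet}^\dagger$ has rank at most one, so the Sherman--Morrison formula gives the exact recursion
\[
g_m \;=\; g_{m-1} \;-\; \frac{1_{\cal N}(X_m)\,\big({\bf e}_0^\dagger B_{m-1}^{-1}\xi_{m;\bullet}\big)^2}{1+1_{\cal N}(X_m)\,\xi_{m;\bullet}^\dagger B_{m-1}^{-1}\xi_{m;\bullet}}\,,
\]
which in particular shows $g_m\le g_{m-1}\le\cdots\le 1$.

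The key deterministic step is to bound the denominator from above. On $\{X_m\in{\cal N}\}$ we have $B_{m-1}\succeq{\cal S}$, hence $B_{m-1}^{-1}\preceq{\cal S}^{-1}$ and $\xi_{m;\bullet}^\dagger B_{m-1}^{-1}\xi_{m;\bullet}\le\xi_{m;\bullet}^\dagger{\cal S}^{-1}\xi_{m;\bullet}$. By the definition of ${\cal S}$ and the multinomial theorem,
\[
\xi_{m;\bullet}^\dagger{\cal S}^{-1}\xi_{m;\bullet}
=\sum_{{\bf k}\in{\cal K}}\binom{|{\bf k}|}{k_1,\ldots,k_J}\prod_{l=1}^J\langle X_m-x,\varphi_l\rangle^{2k_l}
=\sum_{k=0}^{K-1}\Big(\sum_{l=1}^J\langle X_m-x,\varphi_l\rangle^2\Big)^{k}\le\frac{1}{1-\delta^2}\,,
\]
because $\sum_{l\le J}\langle X_m-x,\varphi_l\rangle^2\le\|X_m-x\|^2\le\delta^2$ on ${\cal N}$. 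Consequently the denominator is at most $1+(1-\delta^2)^{-1}=(2-\delta^2)/(1-\delta^2)$, and with $c:=(1-\delta^2)/(2-\delta^2)$ we obtain $g_m\le g_{m-1}-c\,1_{\cal N}(X_m)\big({\bf e}_0^\dagger B_{m-1}^{-1}\xi_{m;\bullet}\big)^2$.

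Next I would take the conditional expectation given $B_{m-1}$. As $X_m$ is independent of $B_{m-1}$ and $E\big[1_{\cal N}(X_m)\,\xi_{m;\bullet}\xi_{m;\bullet}^\dagger\big]={\cal M}_n$, this yields $E[g_m\mid B_{m-1}]\le g_{m-1}-c\,{\bf e}_0^\dagger B_{m-1}^{-1}{\cal M}_n B_{m-1}^{-1}{\bf e}_0$. A Cauchy--Schwarz inequality in the inner product induced by ${\cal M}_n$, namely $({\bf e}_0^\dagger B_{m-1}^{-1}{\bf e}_0)^2=\big({\bf e}_0^\dagger B_{m-1}^{-1}{\cal M}_n^{1/2}\,{\cal M}_n^{-1/2}{\bf e}_0\big)^2\le\big({\bf e}_0^\dagger B_{m-1}^{-1}{\cal M}_n B_{m-1}^{-1}{\bf e}_0\big)\,u_0$, turns this into $E[g_m\mid B_{m-1}]\le g_{m-1}-(c/u_0)\,g_{m-1}^2$. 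Taking a further expectation and applying Jensen's inequality $E[g_{m-1}^2]\ge(E g_{m-1})^2$, the sequence $h_m:=E\,g_m$ satisfies the scalar recursion $h_m\le h_{m-1}-(c/u_0)\,h_{m-1}^2$.

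Finally I would solve this recursion. Since $g_m>0$ forces $0<h_m\le h_{m-1}$, one has $(c/u_0)h_{m-1}<1$, so inverting gives $h_m^{-1}\ge h_{m-1}^{-1}\big(1-(c/u_0)h_{m-1}\big)^{-1}\ge h_{m-1}^{-1}+c/u_0$. Iterating from $h_0=1$ yields $h_n^{-1}\ge 1+cn/u_0$, hence $h_n\le u_0/(u_0+cn)\le u_0/(cn)=\tfrac{2-\delta^2}{1-\delta^2}\,n^{-1}u_0$, which is the assertion. The main obstacle is conceptual rather than computational: the map $A\mapsto{\bf e}_0^\dagger A^{-1}{\bf e}_0$ is convex and decreasing, so Jensen alone only produces a \emph{lower} bound on $E\,g_n$; the recursion is what converts the per-observation decrement into a matching \emph{upper} bound, and the clean constant rests entirely on the uniform estimate $\xi_{m;\bullet}^\dagger{\cal S}^{-1}\xi_{m;\bullet}\le(1-\delta^2)^{-1}$ on ${\cal N}$, where the Tikhonov matrix ${\cal S}$ and the localization radius $\delta$ interact.
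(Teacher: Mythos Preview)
Your argument is correct, and it reaches the claimed bound with exactly the same constant $(2-\delta^2)/(1-\delta^2)$, but by a genuinely different route than the paper. The paper expands $({\cal M}+{\cal S})^{-1}$ around its deterministic counterpart $(E{\cal M}+{\cal S})^{-1}$ via the second-order resolvent identity
\[
({\cal M}+{\cal S})^{-1}-{\cal M}_{n,{\cal S}}^{-1}
={\cal M}_{n,{\cal S}}^{-1}\,\Delta\,({\cal M}+{\cal S})^{-1}\,\Delta\,{\cal M}_{n,{\cal S}}^{-1}
-{\cal M}_{n,{\cal S}}^{-1}\,\Delta\,{\cal M}_{n,{\cal S}}^{-1},
\]
takes expectations so that the linear term vanishes, and bounds the quadratic term using independence of the summands in $\Delta=\sum_j\Delta_j$ together with the same estimate $\xi_{1;\bullet}^\dagger{\cal S}^{-1}\xi_{1;\bullet}\le(1-\delta^2)^{-1}$ on ${\cal N}$ that you use; combined with ${\bf e}_0^\dagger(n{\cal M}_n+{\cal S})^{-1}{\bf e}_0\le n^{-1}u_0$ this gives the result. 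Your approach instead processes the observations sequentially via Sherman--Morrison, turns the rank-one decrement into a conditional inequality $E[g_m\mid{\cal F}_{m-1}]\le g_{m-1}-(c/u_0)g_{m-1}^2$ through Cauchy--Schwarz in the ${\cal M}_n$-inner product, and then closes the scalar recursion. The paper's proof is a one-shot perturbation argument that exploits the i.i.d.\ structure through $E\Delta=0$ and the diagonal variance; yours is more elementary in that it avoids matrix resolvent identities, has a martingale flavour, and makes the role of the regularizer ${\cal S}$ transparent: it is precisely what caps the leverage $\xi_{m;\bullet}^\dagger B_{m-1}^{-1}\xi_{m;\bullet}$ uniformly and produces the clean constant.
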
 
We establish an upper bound on the term ${\cal B}_1$ in (\ref{eq:errt}) by the next lemma.
\begin{lem} \label{L:B1}
We have
$$ {\bf G}^\dagger {\cal S} {\bf G} \, \leq \, \sum_{k=0}^{K-1} \frac{J^k}{k!^2} \|g^{(k)}(x;\cdot)\|^2\,, $$
with $G_{{\bf k}}(x)$ as in (\ref{eq:not}). 
\end{lem}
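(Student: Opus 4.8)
The plan is to expand the quadratic form entrywise, cancel the factorials coming from ${\cal S}$ and from the definition of $G_{\bf k}(x)$, and then collapse the remaining combinatorial sum by the multinomial theorem. No genuine analytic difficulty arises: the statement is essentially a bookkeeping identity combined with the elementary bound $|g^{(k)}(x;\varphi_{l_1},\ldots,\varphi_{l_k})|\le\|g^{(k)}(x;\cdot)\|$.

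Since ${\cal S}$ is diagonal with $({\bf k},{\bf k})$-entry $1/\binom{|{\bf k}|}{k_1,\ldots,k_J}$ and the coordinates $G_{\bf k}(x)$ of ${\bf G}$ are real, I would first write
$$ {\bf G}^\dagger {\cal S}\, {\bf G} \, = \, \sum_{{\bf k}\in{\cal K}} \frac{G_{\bf k}(x)^2}{\binom{|{\bf k}|}{k_1,\ldots,k_J}}\,. $$
Inserting the definition $G_{\bf k}(x) = g^{(|{\bf k}|)}(x;\varphi_1,\ldots,\varphi_J)\big/\prod_{j}k_j!$ from (\ref{eq:not}) together with $\binom{|{\bf k}|}{k_1,\ldots,k_J} = |{\bf k}|!\big/\prod_{j}k_j!$, exactly one power of $\prod_j k_j!$ cancels and each summand reduces to $\big[g^{(|{\bf k}|)}(x;\varphi_1,\ldots,\varphi_J)\big]^2\big/\big(|{\bf k}|!\,\prod_j k_j!\big)$.

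Next I would group the sum according to the common differentiation order $k=|{\bf k}|$, which ranges over $0\le k\le K-1$ on ${\cal K}$. For every multi-index with $|{\bf k}|=k$ the derivative $g^{(k)}(x;\varphi_1,\ldots,\varphi_J)$ is evaluated at exactly $k$ arguments, each one a basis function $\varphi_l$ of unit norm; hence the definition (\ref{eq:gnorm}) of $\|g^{(k)}(x;\cdot)\|$ yields the uniform bound $|g^{(k)}(x;\varphi_1,\ldots,\varphi_J)|\le \|g^{(k)}(x;\cdot)\|$. Replacing the squared derivative by $\|g^{(k)}(x;\cdot)\|^2$ gives
$$ {\bf G}^\dagger {\cal S}\, {\bf G} \, \le \, \sum_{k=0}^{K-1} \frac{\|g^{(k)}(x;\cdot)\|^2}{k!} \sum_{\substack{{\bf k}\in\mathbb{N}_0^J\\ |{\bf k}|=k}} \frac{1}{\prod_{j=1}^J k_j!}\,. $$

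The final step is the combinatorial identity $\sum_{|{\bf k}|=k} 1\big/\prod_j k_j! = J^k/k!$: multiplying and dividing by $k!$ turns the summand into the multinomial coefficient $\binom{k}{k_1,\ldots,k_J}$, and the multinomial theorem with all arguments set to one gives $\sum_{|{\bf k}|=k}\binom{k}{k_1,\ldots,k_J} = (\underbrace{1+\cdots+1}_{J})^k = J^k$. Substituting $J^k/k!$ into the inner sum produces the factor $J^k/k!^2$ and, after summing over $k$, the asserted bound. The only point requiring care is the factorial bookkeeping, namely confirming that the multinomial reciprocal in ${\cal S}$ cancels precisely one copy of $\prod_j k_j!$ while leaving an extra $1/k!$ that, combined with the $1/k!$ from the identity, yields the $1/k!^2$ appearing in the statement.
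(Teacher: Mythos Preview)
Your proposal is correct and follows essentially the same route as the paper: both expand the quadratic form, cancel factorials so that each summand becomes $[g^{(k)}(x;\ldots)]^2/(k!\prod_j k_j!)$, bound the derivative by $\|g^{(k)}(x;\cdot)\|^2$, and then evaluate the inner combinatorial sum via the multinomial theorem. The only cosmetic difference is that the paper writes the multinomial identity with all arguments equal to $1/J$ (so the inner sum collapses to $1$ after extracting $J^k/k!^2$), while you take all arguments equal to $1$; the two formulations are trivially equivalent.
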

Applying the Cauchy-Schwarz inequality and the positive semi-definiteness of ${\cal M}$ we deduce from Lemma \ref{L:V} and \ref{L:B1} that
\begin{align} \nonumber 
E\big|{\cal B}_1\big|^2 & \, \leq \, {\bf G}^\dagger {\cal S} {\bf G} \cdot E {\bf e}_0^\dagger ({\cal M}+{\cal S})^{-1} {\bf e}_0 \\
\label{eq:B1}  
& \, \leq \, \frac{2-\delta^2}{1-\delta^2} \cdot  n^{-1} \cdot u_0 \cdot  \sum_{k=0}^{K-1} \frac{J^k}{k!^2} \|g^{(k)}(x;\cdot)\|^2\,. 
\end{align}
Now we focus on the terms ${\cal B}_2$ and ${\cal B}_3$ and impose that $\delta$ has an upper bound smaller than $1$. Using the Cauchy-Schwarz inequality with respect to the sum over $j=1,\ldots,n$; and (\ref{eq:V}), we have that 
$$ |{\cal B}_3|^2 \, \leq \, {\bf e}_0^\dagger ({\cal M}+{\cal S})^{-1} {\bf e}_0 \, \sum_{j=1}^n 1_{{\cal N}}(X_j) \cdot \big({\cal R}_S^{(K)}(x,X_j)\big)^2\,. $$ 
Then, by Lemma \ref{L:u0} and \ref{L:V}, along with the bound (\ref{eq:remain}), we conclude that 
\begin{equation} \label{eq:B3}
 |{\cal B}_3|^2  \leq {\cal O}_P\Big(\exp\big(8c_1\cdot (K-1)\big) \cdot J^{(8c_1+2)\cdot (K-1)}\cdot \delta^{2K} \sup_{y\in {\cal N}} \big\|g^{(K)}(y;\cdot)\big\|^2 / (K!)^2\Big),
\end{equation}
holds true under (\ref{eq:pJ}). The term $|{\cal B}_2|^2$ is analogously bounded from above while Lemma \ref{L:bounddim} is used. We impose (\ref{eq:pJ}), again, and that
\begin{equation} \label{eq:B2cond}
\sum_{k=1}^{K-1} \frac{\delta^{k-1}}{(k-1)!} \cdot \big\|g^{(k)}(x;\cdot)\big\| \, \leq \, c_3\,, \end{equation}
for some universal constant $c_3 \in (0,\infty)$. Then,
\begin{equation} \label{eq:B2}
 |{\cal B}_2|^2  \leq {\cal O}_P\Big(\exp\big(8c_1\cdot (K-1)\big) \cdot J^{(8c_1+2)\cdot (K-1)} \cdot \sum_{j>J} \langle \varphi_j, \Gamma_{{\cal N}} \varphi_j\rangle\Big),
\end{equation}
where $\Gamma_{{\cal N}}$ denotes the linear operator from $L_2([0,1])$ to itself with
$$ \Gamma_{{\cal N}} \, := \, E \big( X_1^* \otimes X_1^* \mid X_1 \in {\cal N}\big)\,, $$
where $\otimes$ denotes the outer product in $L_2([0,1])$. The operator $\Gamma_{{\cal N}}$ has to be studied with an eye on (\ref{eq:B2}). To gain some intuition we reprise the functional random variable (\ref{eq:Gauss0}) from Example \ref{ex1}.
\begin{prop} \label{P:2}
Let $X_1$ be the Gaussian functional random variable from (\ref{eq:Gauss0}). Then, for any integer $j\geq 1$, it holds that 
$$ \langle \varphi_j, \Gamma_{{\cal N}} \varphi_j\rangle \, \leq \, \langle z,\varphi_j\rangle^2 + \lambda_j\,. $$
\end{prop}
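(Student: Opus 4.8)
The plan is to reduce the statement to a one-dimensional moment inequality for a truncated Gaussian. Recall that
$$\langle\varphi_j,\Gamma_{\mathcal{N}}\varphi_j\rangle \,=\, E\big(\langle X_1^*,\varphi_j\rangle^2 \mid X_1\in\mathcal{N}\big),$$
since $\langle\varphi_j,(X_1^*\otimes X_1^*)\varphi_j\rangle=\langle X_1^*,\varphi_j\rangle^2$. Writing $W_l:=\langle X_1^*,\varphi_l\rangle=\langle z,\varphi_l\rangle+\lambda_l^{1/2}\eta_l$, each $W_l$ is Gaussian with mean $\langle z,\varphi_l\rangle$ and variance $\lambda_l$, the $W_l$ are independent, and $E(W_j^2)=\langle z,\varphi_j\rangle^2+\lambda_j$ is exactly the claimed upper bound. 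By Parseval the conditioning event is $\{X_1\in\mathcal{N}\}=\{\|X_1^*\|^2\le\delta^2\}=\big\{\sum_l W_l^2\le\delta^2\big\}=:A$ (positive probability, as assumed throughout). Hence the proposition is equivalent to the assertion that conditioning on the small-ball event $A$ cannot increase the second moment of $W_j$, i.e. $E(W_j^2\mid A)\le E(W_j^2)$.

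First I would exploit the independence structure. Set $R^2:=\sum_{l\neq j}W_l^2$, which is independent of $W_j$, so that $A=\{W_j^2\le\delta^2-R^2\}$. Conditioning on $R$, the event reduces to $\{|W_j|\le s\}$ with the (random) threshold $s=\big((\delta^2-R^2)_+\big)^{1/2}$. It therefore suffices to prove, for every fixed $s>0$ and $W:=W_j\sim N(\mu,\sigma^2)$,
$$E\big(W^2\,1_{\{|W|\le s\}}\big)\,\le\,E(W^2)\cdot P\big(|W|\le s\big).$$
Indeed, given $R$ and using $E(W_j^2\mid R)=E(W_j^2)$ by independence, this yields $E\big(W_j^2\,1_A\mid R\big)\le E(W_j^2)\,P(A\mid R)$ (the case $R>\delta$ being trivial since both sides vanish); integrating over the law of $R$ gives $E(W_j^2\,1_A)\le E(W_j^2)\,P(A)$, which is the desired bound after dividing by $P(A)$.

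The main step, and the only place requiring care, is this one-dimensional truncation inequality; the subtlety is that for $\mu\neq0$ the truncation to $\{|W|\le s\}$ is asymmetric about the mean, so it is not a priori clear that the second moment decreases. Set $m_2:=E(W^2)=\mu^2+\sigma^2$ and $h(s):=\int_{-s}^{s}(w^2-m_2)\phi(w)\,dw$, where $\phi$ is the $N(\mu,\sigma^2)$ density; the claim is $h(s)\le0$ for all $s>0$. I would verify this by noting $h(0)=0$ and $h(\infty)=\int_{\mathbb{R}}(w^2-m_2)\phi\,dw=0$, and computing $h'(s)=(s^2-m_2)\big(\phi(s)+\phi(-s)\big)$. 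Since $\phi(s)+\phi(-s)>0$, the function $h$ is decreasing on $(0,\sqrt{m_2})$ and increasing on $(\sqrt{m_2},\infty)$; together with the boundary values $h(0)=h(\infty)=0$ this forces $h\le0$ throughout. (Equivalently, conditionally on $R$ both $W^2$ and $1_{\{|W|\le s\}}$ are monotone functions of $W^2$ with opposite monotonicity, so their covariance is non-positive by the correlation inequality.) Integrating back over $R$ as above completes the argument.
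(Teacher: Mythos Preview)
Your proof is correct and follows essentially the same approach as the paper: both reduce to the one-coordinate truncation inequality $E(\zeta_j\mid \zeta_j\le t)\le E\zeta_j$ (with $\zeta_j=W_j^2$) via independence of $W_j$ and $R^2=\sum_{l\ne j}W_l^2$, then integrate over the law of $R$. The paper simply states this truncation inequality as a fact, whereas you supply the explicit calculus argument (and note the equivalent correlation-inequality justification, which is the implicit reasoning behind the paper's one-line claim).
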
 
The results of this section remain valid if $g$ is imposed to be $\beta$-fold Fr{\'e}chet differentiable on ${\cal N}$ for some integer $\beta \leq K$ when $K$ is replaced by $\beta$ in (\ref{eq:remain}) and in (\ref{eq:B3}) accordingly. Just put all (non-existing) Fr{\'e}chet derivatives $g^{(k)}(x;\cdot)$ equal to $0$ for $k>\beta$ in (\ref{eq:tayapp}) so that the decomposition (\ref{eq:app}) still holds true. This reflects in the following theorem. Using the decomposition (\ref{eq:errt}) and piecing together (\ref{eq:V}), the Lemmata \ref{L:u0} and \ref{L:V} as well as (\ref{eq:B1}), (\ref{eq:B3}) and (\ref{eq:B2}), we obtain
\begin{theo} \label{T:1} 
Assume that $g$ is $\beta$-fold Fr{\'e}chet differentiable on ${\cal N}$ for some integer $\beta\leq K$. Impose that $\delta$ obeys an upper bound smaller than $1$; and grant (\ref{eq:pJ}) and (\ref{eq:B2cond}). Then, the estimator $\hat{g}$ in (\ref{eq:hatg}) satisfies
\begin{align*} |\hat{g}(x) - g(x)|^2 \, = \, {\cal O}_P\Big(& \exp\big(8c_1\cdot (K-1)\big) \cdot J^{(8c_1+2)\cdot (K-1)} \\ & \cdot \Big\{\Big( \sigma^2 + \sum_{k=0}^{\beta-1} \frac{J^k}{k!^2} \|g^{(k)}(x;\cdot)\|^2\Big) \, / \, \big( n\cdot P[X_1\in {\cal N}]\big) \\ & \, + \, \delta^{2\beta} \sup_{y\in {\cal N}} \big\|g^{(\beta)}(y;\cdot)\big\|^2 / (\beta!)^2 \, + \, \sum_{j>J} \langle \varphi_j, \Gamma_{{\cal N}} \varphi_j\rangle\Big\}\Big)\,, \end{align*}
uniformly with respect to $g$.
\end{theo}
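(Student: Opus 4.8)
\textbf{Proof proposal for Theorem \ref{T:1}.}

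The plan is to assemble the componentwise bounds already established in this section through the error decomposition \eqref{eq:errt} and then pass from mean-square estimates to stochastic-order statements. I would begin from $\hat g(x)-g(x)=\mathcal{B}_1+\mathcal{B}_2+\mathcal{B}_3+\mathcal{V}$ and apply the elementary inequality $|a+b+c+d|^2\le 4(|a|^2+|b|^2+|c|^2+|d|^2)$, so that it suffices to control each of the four terms separately and add the resulting rates. Throughout I would invoke the reduction noted just before the theorem: since $g$ is only $\beta$-fold Fr\'echet differentiable, one sets $g^{(k)}(x;\cdot)=0$ for $k>\beta$ in \eqref{eq:tayapp}, which keeps the decomposition \eqref{eq:app} valid and replaces $K$ by $\beta$ in the Taylor remainder \eqref{eq:remain}, while leaving the fitted polynomial degree (hence the cardinality of ${\cal K}$ and the bound on $u_0$) governed by $K$.

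For the stochastic term $\mathcal{V}$, I would use that it is conditionally centered given $X_1,\ldots,X_n$, so its second moment equals the expectation of its conditional variance, which by \eqref{eq:V} is at most $\sigma^2\cdot E\,{\bf e}_0^\dagger(\mathcal{M}+\mathcal{S})^{-1}{\bf e}_0$. Chaining Lemma \ref{L:V} and Lemma \ref{L:u0} then yields $E|\mathcal{V}|^2=\mathcal{O}\big(\exp(8c_1(K-1))\,J^{(8c_1+2)(K-1)}\,\sigma^2/(n\,P[X_1\in{\cal N}])\big)$, and Markov's inequality converts this into the corresponding ${\cal O}_P$ statement. The bias-type term $\mathcal{B}_1$ is handled identically, starting from the mean-square bound \eqref{eq:B1} and again substituting the Lemma \ref{L:u0} estimate for $u_0$; with the derivatives of order $>\beta$ set to zero, the Lemma \ref{L:B1} sum truncates at $k=\beta-1$, producing the $\sum_{k=0}^{\beta-1}\frac{J^k}{k!^2}\|g^{(k)}(x;\cdot)\|^2$ contribution to the first bracketed term.

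The two remaining remainders are already in stochastic-order form: inequality \eqref{eq:B3} (with $K\to\beta$) supplies the Taylor-approximation bias $\delta^{2\beta}\sup_{y\in{\cal N}}\|g^{(\beta)}(y;\cdot)\|^2/(\beta!)^2$, and \eqref{eq:B2}, valid under \eqref{eq:B2cond}, supplies the finite-dimensional truncation error $\sum_{j>J}\langle\varphi_j,\Gamma_{\cal N}\varphi_j\rangle$; both carry the same prefactor $\exp(8c_1(K-1))\,J^{(8c_1+2)(K-1)}$ inherited from Lemma \ref{L:u0}. Factoring this common prefactor out of all four contributions and summing then reproduces exactly the three bracketed terms in the statement.

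The step requiring the most care is the uniformity in $g$. I would verify that every constant entering the four bounds---the $c_1,c_2$ of \eqref{eq:pJ} and Lemma \ref{L:u0}, the $c_3$ of \eqref{eq:B2cond}, the factor $(2-\delta^2)/(1-\delta^2)$ of Lemma \ref{L:V}, and $\sigma^2$---is independent of $g$, with all dependence on $g$ surfacing explicitly through the norms $\|g^{(k)}(x;\cdot)\|$ and $\sup_{y\in{\cal N}}\|g^{(\beta)}(y;\cdot)\|$ that are retained in the stated bound. Because the Markov step for $\mathcal{V}$ and $\mathcal{B}_1$ relies only on these uniform second-moment bounds, the implied ${\cal O}_P$ constant can be chosen free of $g$; the same holds for \eqref{eq:B2} and \eqref{eq:B3} by construction, whence the final bound is uniform over the admissible class of regression functions. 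The associated bookkeeping---keeping the $u_0$-prefactor indexed by $K$ while all smoothness-driven terms are indexed by $\beta$---is the point where an inconsistent substitution could most easily slip in, so I would track it explicitly across each of the four terms.
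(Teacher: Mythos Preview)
Your proposal is correct and follows exactly the route the paper takes: the theorem is obtained by combining the decomposition \eqref{eq:errt} with \eqref{eq:V}, Lemmata \ref{L:u0} and \ref{L:V}, and the bounds \eqref{eq:B1}, \eqref{eq:B3}, \eqref{eq:B2}, after setting the nonexisting derivatives to zero and replacing $K$ by $\beta$ in \eqref{eq:remain} and \eqref{eq:B3}. Your added remarks on the Markov step and on tracking the $K$-versus-$\beta$ indexing are consistent with the paper's one-line assembly and simply make that bookkeeping explicit.
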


\section{Polynomial Convergence Rates} \label{PCR}

We derive the convergence rates of the estimator $\hat{g}$ under the following conditions. We impose that $g$ is infinitely fold Fr{\'e}chet-differentiable in some neighbourhood ${\cal N} = {\cal N}(x,\delta)$ for some fixed $\delta\in (0,1)$; and that the Fr{\'e}chet derivatives $g^{(k)}$ satisfy
\begin{equation} \label{eq:PCR.1}
\sup_{k\geq 0} \, \sup_{y\in {\cal N}} \, \big\|g^{(k)}(y,\cdot)\big\| \, \leq \, C_{Fre}\,,
\end{equation}
for some global constant $C_{Fre}$. With respect to the operator $\Gamma_{\cal N}$ and the orthonormal basis $\{\varphi_j\}_j$ we assume that
\begin{equation} \label{eq:PCR.2}
\sum_{j=1}^\infty \exp\big(C_{\Gamma,1} \cdot j^\gamma\big) \cdot \langle \varphi_j , \Gamma_{\cal N} \varphi_j\rangle \, \leq \, C_{\Gamma,2}\, 
\end{equation}
for two positive constants $C_{\Gamma,1}$, $C_{\Gamma,2}$ and $\gamma>0$. In the Gaussian Example \ref{ex1}, the condition (\ref{eq:PCR.2}) is satisfied whenever $z$ satisfies (\ref{eq:prop2}) and  the principal components $(\lambda_j)_j$ decay exponentially fast, i.e. 
$$ \sum_{j=1}^\infty \exp\big(C_{\Gamma,1} \cdot j^\gamma\big) \cdot \lambda_j \, \leq \, C_{\Gamma,2}^*\,, $$ 
with a constant $C_{\Gamma,2}^*$ sufficiently small with respect to $c_2^*$ and $C_{\Gamma,2}$,  thanks to Proposition \ref{P:2}. Moreover we impose that the probability $P[X_1\in {\cal N}]$ has some fixed positive lower bound. We select the parameters $K$ and $J$ such that
\begin{align} \label{eq:KJ}
J  & \, \asymp \, (\log n)^{D_0}\,, \\
K  & \, = \, \lfloor D_1 (\log n) / \log\log n \rfloor\,,
\end{align}
with two constants $D_0,D_1>0$ which remain to be determined. Reconsidering Theorem \ref{T:1} for $\beta=K$, we deduce that
\begin{align*} & \exp\big(8c_1\cdot (K-1)\big) \cdot J^{(8c_1+2)\cdot (K-1)} \cdot \sum_{j>J} \langle \varphi_j,\Gamma_{\cal N} \varphi_j\rangle \\ &  \, \leq \, C_{\Gamma,2} \cdot \exp\big\{8c_1\cdot (K-1) + (8c_1+2)\cdot (K-1) \cdot (\log J) - C_{\Gamma,1} \cdot J^\gamma\big\}\,,
\end{align*}
by (\ref{eq:PCR.2}), where this term tends to zero at a rate faster than $1/n$ whenever 
\begin{equation} \label{eq:PCR.cond1}
\gamma D_0 \, > \, 1\,.
\end{equation}
Moreover, we use Stirling's approximation to show that
\begin{align*} & \exp\big(8c_1\cdot (K-1)\big) \cdot J^{(8c_1+2)\cdot (K-1)} / K!^2  \\
& \, \leq \, \exp\big(8c_1\cdot (K-1) + (8c_1+2)\cdot (K-1) \cdot (\log J) + 2K - 2 K \log K\big)\,,
\end{align*}
which converges to zero at a rate faster than $1/n$ whenever
\begin{equation} \label{eq:PCR.cond2}
(4c_1+1) D_0 \, < \, 1\, \mbox{ and }\, 2D_1\big((4c_1+1)D_0-1\big) \, > \, 1\,.
\end{equation}
Finally, since $\sum_{k=0}^{K-1} J^k / k!^2 \leq e\cdot J^K$, the term
\begin{align*}
& \exp\big( 8c_1 \cdot K + (8c_1+3) \cdot K \cdot (\log J) - \log n\big)\,,
\end{align*}
obeys the upper bound $o\big(n^{-\kappa}\big)$ for any $\kappa > 1 - (8c_1+3)D_0D_1$ whenever
\begin{equation} \label{eq:PCR.cond3}
(8c_1+3)D_0D_1 \, < \, 1\,.
\end{equation}
Note that the conditions (\ref{eq:PCR.cond1})--(\ref{eq:PCR.cond3}) can simultaneously be satisfied when $\gamma$ is sufficiently large. Then Theorem \ref{T:1} yields that
$$ |\hat{g}(x) - g(x)|^2 \, = \, {\cal O}_P(n^{-\kappa})\,.$$
so that polynomial rates are achieved.

\section{Proofs} \label{Proofs}

\begin{proof}[Proof of Lemma \ref{L:bounddim}]

Write $X_j^* := X_j-x$ and $X_j^{*,[J]}$ for the orthogonal projection of $X_j^*$ onto the linear hull of $\varphi_1,\ldots,\varphi_J$; moreover $X_j^{*,\bot} := X_j^* - X_j^{*,[J]}$. A telescoping sum expansion yields that
\begin{align*}
\big|& g^{(k)}(x;X_j^*,\ldots,X_j^*) - g^{(k)}(x;X_j^{*,[J]},\ldots,X_j^{*,[J]})\big| \\ & \, = \, \Big| 
\sum_{r=0}^{k-1} g^{(k)}(x;\underbrace{X_j^{*,[J]},\ldots,X_j^{*,[J]}}_{r-\mbox{fold}},X_j^*,X_j^*,\ldots,X_j^*) \, - \, g^{(k)}(x;\underbrace{X_j^{*,[J]},\ldots,X_j^{*,[J]}}_{(r+1)-\mbox{fold}},X_j^*,\ldots,X_j^*)\Big| \\
& \, \leq \, \sum_{r=0}^{k-1} \big|g^{(k)}(x;\underbrace{X_j^{*,[J]},\ldots,X_j^{*,[J]}}_{r-\mbox{fold}},X_j^{*,\bot},X_j^*,\ldots,X_j^*)\big| \\
& \, \leq \, k \cdot \big\|g^{(k)}(x;\cdot)\big\| \cdot \delta^{k-1} \cdot \|X_j^{*,\bot}\|\,.
\end{align*}
as $\|X_j^{*,[J]}\| \leq \|X_j^*\| \leq \delta$. Considering that $\|X_j^{*,\bot}\|^2 = \sum_{l>J} \langle X_j^*,\varphi_l\rangle^2$ leads to the bound
\begin{align}\label{eq:L:bounddim:1}
    \mathrm{I} \leq k \cdot \big\|g^{(k)}(x;\cdot)\big\| \cdot \delta^{k-1} \cdot \big(\sum_{l>J} \langle X_j^*,\varphi_l\rangle^2 \big)^{1/2}.
\end{align}


\end{proof}

\noindent {\it Proof of Lemma \ref{L:u0}}: First define
$$ U_j \, := \, \sum_{{\bf k}\in {\cal K}} u_{{\bf k}} \cdot \xi_{j,{\bf k}}\,. $$
Moreover we introduce the Hilbert space ${\cal H}$ of the equivalence classes consisting of complex-valued random variables $f$ which are measurable with respect to the $\sigma$-algebra generated by $X_1$; satisfy
$$ \|f\|_{\cal H}^2 \, = \, E 1_{\cal N}(X_1) |f|^2 \, < \, \infty\,, $$
and are indistinguishable with respect to the $\|\cdot\|_{\cal H}$-norm. The corresponding inner product is denoted by
$$ \langle f,g\rangle_{\cal H} \, := \, E 1_{{\cal N}}(X_1) f \cdot \overline{g}\,, \qquad f,g \in {\cal H}\,. $$
Writing ${\bf 1}:=\xi_{1;0}$ we deduce that $\langle{\bf 1},U_1\rangle_{\cal H} = 1$ and $\|U_1\|_{\cal H}^2 = u_0$. Let ${\cal H}_{J,K}$ and ${\cal H}_{J,K}^0$ denote the linear hulls of all $\xi_{1;{\bf k}}$, ${\bf k} \in {\cal K}$ and of all $\xi_{1;{\bf k}}$, ${\bf k} \in {\cal K} \backslash\{0\}$, respectively. As $\langle\xi_{1;{\bf k}},U_1\rangle_{\cal H} = 0$ for all ${\bf k}\in {\cal K}\backslash\{0\}$, the random variable $U_1$ lies in the orthogonal complement of ${\cal H}_{J,K}^0$ with respect to ${\cal H}_{J,K}$, which is at most one-dimensional. Therefore the orthogonal projection of ${\bf 1}\in {\cal H}_{J,K}$ onto ${\cal H}_{J,K}^0$ may be represented by ${\bf 1} - \lambda U_1$ for some $\lambda\in \mathbb{C}$. Since $0 = \langle{\bf 1}-\lambda U_1,U_1\rangle_{\cal H}$ it holds that $u_0 = 1/\lambda$. The squared $\|\cdot\|_{\cal H}$-distance between ${\bf 1}$ and ${\cal H}_{J,K}^0$ equals $\|\lambda U_1\|_{\cal H}^2 = \lambda^2 u_0 = 1/u_0$. We have shown that
$$ u_0 \, = \, 1 / \inf\big\{\|{\bf 1} + h\|_{\cal H}^2 : h\in {\cal H}_{J,K}^0\big\}\,. $$
Note that ${\bf 1}+h$, $h\in {\cal H}_{J,K}^0$, may be written as $Q_{J,K}\big(\langle X_1^*,\varphi_1\rangle,\ldots,\langle X_1^*,\varphi_J\rangle\big)$ where $X_1^*:=X_1-x$ and $Q_{J,K}$ is a $J$-variate polynomial of the degree $\leq K-1$ with $Q_{J,K}(0)=1$. Then we apply the fundamental theorem of algebra with the corresponding factorization to the univariate polynomial 
$$ \hat{Q}_{1,K}(t) \, := \, Q_{J,K}\big(X_{1,[J]}^{*,1}\cdot t\big)\,, \qquad t\in \mathbb{C}\,, $$
which satisfies $\hat{Q}_{1,K}(0)=1$ and has the degree $K^* \leq K-1$. This yields that
\begin{align*} u_0 & \, \leq \, 1 \, / \, E \,  \inf_{\hat{Q}_{1,K}} E \Big( 1_{{\cal N}}(X_1) \cdot  \prod_{k=1}^{K^*} \Big|1 - \big\|X_{1,[J]}^*\big\|_\lambda / |\zeta_k|\Big|^2 \mid X_{1,[J]}^{*,1}\Big) \\
& \, = \, 1 \, / \, E \, \Phi_J^* \, \inf_{\hat{Q}_{1,K}} \, \int \,\prod_{k=1}^{K^*} \big|1 - y / |\zeta_k|\big|^2 p_{J,\delta}\big(y;X_{1,[J]}^{*,1}\big) dy \\
&  \, \leq \, 1 \, / \, E \, 1_{\mathfrak{E}(J,\delta)} \, \Phi_J^* \, \inf_{\hat{Q}_{1,K}} \, \int \,\prod_{k=1}^{K^*} \big|1 - y / |\zeta_k|\big|^2 p_{J,\delta}\big(y;X_{1,[J]}^{*,1}\big) dy\,,  \end{align*}
where $\zeta_k$, $k=1,\ldots,K^*$, denote the complex roots of $\hat{Q}_{1,K}$  and $\mathfrak{E}(J,\delta)$ stands for the event
$$ \mathfrak{E}(J,\delta) \, := \, \Big\{ \sup_{t>0} \, t \cdot p_{J,\delta}\big(t;X_{1,[J]}^{*,1}\big) \, \leq \, c_1 \cdot J\Big\}\,. $$
Note that the $(K-1)$-dimensional random vector $(\zeta_1,\ldots,\zeta_{K^*},0,\ldots,0)$ is measurable with respect to the $\sigma$-algebra generated by $X_{1,[J]}^{*,1}$. By Jensen's inequality we deduce that 
\begin{align} \nonumber
 u_0 & \, \leq \, 1 \, / \, E \, 1_{\mathfrak{E}(J,\delta)} \, \Phi_J^* \cdot \exp\Big(2  \, \inf_{\hat{Q}_{1,K}} \, \sum_{k=1}^{K^*} \, \int \log\big|1 - y / |\zeta_k|\big| \, p_{J,\delta}\big(y;X_{1,[J]}^{*,1}\big) \, dy \Big) \\ \label{eq:Baustelle1} &
 \leq 1 \, / \, E \,  1_{\mathfrak{E}(J,\delta)} \, \Phi_J^* \cdot \exp\Big(2 (K-1) \, \min\Big\{ 0 , \, \inf_{r> 0} \int \log\big|1 - y / r\big| \, p_{J,\delta}\big(y;X_{1,[J]}^{*,1}\big) \, dy \Big\}\Big)\,.
\end{align}
Applying the inequality (\ref{eq:pJ}) the integral in (\ref{eq:Baustelle1}) obeys the following lower bound on the event $\mathfrak{E}(J,\delta)$.
\begin{align*}
\int \log\big|1 -  y / r\big|\, &  p_{J,\delta}\big(y;X_{1,[J]}^{*,1}\big)  dy \\ &  \geq \frac1{r(1-1/J)} \, \int_{r(1-1/J)}^{r(1+1/J)} \big(\log |1-y/r|\big) \cdot p_{J,\delta}\big(y;X_{1,[J]}^{*,1}\big) \, y \, dy \, + \, \log(1/J) \\
&  =  \frac1{1-1/J} \, \int_{1-1/J}^{1+1/J} \big(\log |1-z|\big) \cdot p_{J,\delta}\big(rz;X_{1,[J]}^{*,1}\big) \, rz \, dz \, - \, \log J \\
&  \geq \frac{2 c_1 J}{1 - 1/J} \cdot \frac{1}{J} \cdot \big(\log(1/J) - 1\big) \, - \, \log J \\
&   = - 4 c_1 \, - \, (4 c_1 + 1)\cdot \log J\,,
\end{align*}
as $J\geq 2$. 

Inserting this bound in (\ref{eq:Baustelle1}) we conclude that
\begin{align*}
u_0 & \, \leq \, \exp\big(8 c_1 \cdot (K-1)\big) \cdot J^{(8c_1+2)\cdot (K-1)} \, / \, E\, 1_{\mathfrak{E}(J,\delta)} \Phi_J^* \\  & \, \leq \, c_2 \cdot \exp\big(8 c_1 \cdot (K-1)\big) \cdot J^{(8c_1+2)\cdot (K-1)} \, / \, P[X_1\in {\cal N}]\,,
\end{align*}
using the condition (\ref{eq:pJ}). \hfill $\square$ \\

\noindent {\it Proof of Proposition \ref{P:1}}: First we derive the conditional Lebesgue density of $\|X_{1,[J]}^*\|_\lambda$ given $X_{1,[J]}^{*,1}$. For two probe functions $\Psi_0$ and $\Psi_1$ it holds that
\begin{align*}
E & \Psi_0\big(\|X_{1,[J]}^*\|_\lambda\big) \cdot \Psi_1\big(X_{1,[J]}^{*,1}\big) \\
&  =  \int \Psi_0\big( \big|\Lambda^{-1/2} {\bf u}\big|\big)  \Psi_1\big({\bf u}/\big|\Lambda^{-1/2} {\bf u}\big| \big)  \exp\big\{-({\bf u} - {\bf z})^\dagger \Lambda^{-1} ({\bf u} - {\bf z}) / 2\big\} d{\bf u} \, \big( (2\pi)^J \, \det \Lambda\big)^{-1/2} \\
&  =  \exp\big\{-|\Lambda^{-1/2}{\bf z}|^2/2\big\} \, E \Psi_0(|{\bf \eta}|) \Psi_1\big(\Lambda^{1/2} {\bf \eta} / |{\bf \eta}|\big) \exp\big\{{\bf z}^\dagger \Lambda^{-1/2} {\bf \eta}\big\} \\
&  =  \exp\big\{-|\Lambda^{-1/2}{\bf z}|^2/2\big\} \, E  \Psi_1\big(\Lambda^{1/2} {\bf \eta}_0\big) \\ & \hspace{3.5cm} \cdot \int_0^\infty \Psi_0(s) \exp\big\{{\bf z}^\dagger \Lambda^{-1/2} {\bf \eta}_0\cdot s\big\} s^{J-1} \exp(-s^2/2) ds \cdot 2^{1-J/2} / \Gamma(J/2)\,,
\end{align*}
where ${\bf \eta}_0 := {\bf \eta} / |{\bf \eta}|$. We have used that ${\bf \eta}_0$ and $|\eta|$ are independent and that $|\eta|$ obeys the $\chi^2(J)$-law. By the factorization lemma the conditional expectation of $\Psi_0(\|X_{1,[J]}^*\|_\lambda)$ given $X_{1,[J]}^{*,1}$ may be written as $\tilde{\Psi}_0(X_{1,[J]}^{*,1})$ for some function $\tilde{\Psi}_0$. Now, putting $\Psi_0 \equiv 1$ and changing $\Psi_1$ to $\Psi_1\cdot \tilde{\Psi}_0$, we deduce that
\begin{align*} \tilde{\Psi}_0\big(\Lambda^{1/2} {\bf \eta}_0\big) \, = \, & \int_0^\infty \Psi_0(s) \exp\big\{{\bf z}^\dagger \Lambda^{-1/2} {\bf \eta}_0\cdot s - s^2/2\big\} s^{J-1} ds \\ &  \, \big/ \,  \int_0^\infty \exp\big\{{\bf z}^\dagger \Lambda^{-1/2} {\bf \eta}_0\cdot s - s^2/2\big\} s^{J-1} ds\,, \end{align*}
holds true almost surely so that the conditional density of $\|X_{1,[J]}^*\|_\lambda$ given $X_{1,[J]}^{*,1}$ turns out to be 
\begin{align*}
f\big(t \mid X_{1,[J]}^{*,1}\big) \, = \, & 1_{[0,\infty)}(t) \cdot t^{J-1}  \exp\big\{-\big(t - {\bf z}^\dagger \Lambda^{-1} X_{1,[J]}^{*,1}\big)^2/2\big\} \\ &  \big/ \, \int_0^\infty s^{J-1} \cdot \exp\big\{-\big(s - {\bf z}^\dagger \Lambda^{-1} X_{1,[J]}^{*,1}\big)^2/2\big\} ds\,, \end{align*}
for all $t\in \mathbb{R}$. Hence, for any Borel set $A \subseteq \mathbb{R}$, we obtain that
\begin{align*}
E & \big\{ 1_{{\cal N}}(X_1) \cdot 1_A(\|X_{1,[J]}^*\|_\lambda) \mid X_{1,[J]}^{*,1}\big\} \\
& \, = \, \int_0^\infty 1_A(s) \, F_J\big(\delta^2 - |X_{1,[J]}^{*,1}|^2 s^2\big) \,  s^{J-1}  \exp\big\{-\big(s - {\bf z}^\dagger \Lambda^{-1} X_{1,[J]}^{*,1}\big)^2/2\big\} ds \\ & \hspace{7cm} \big/ \,  \int_0^\infty s^{J-1} \exp\big\{-\big(s - {\bf z}^\dagger \Lambda^{-1} X_{1,[J]}^{*,1}\big)^2/2\big\} ds\,,
\end{align*}
where $F_J$ denotes the distribution function of $\sum_{j>J} (\lambda_j^{1/2} \eta_j + \langle z,\varphi_j\rangle)^2$. Thus, for $t>0$, 
\begin{align*}
t & \cdot p_{J,\delta}\big(t; X_{1,[J]}^{*,1}\big) \, = \, F_J\big(\delta^2 - |X_{1,[J]}^{*,1}|^2 t^2\big) \,  t^J  \exp\big\{-\big(t - {\bf z}^\dagger \Lambda^{-1} X_{1,[J]}^{*,1}\big)^2/2\big\} \\ & \hspace{3.7cm} \big/ \, \int_0^\infty F_J\big(\delta^2 - |X_{1,[J]}^{*,1}|^2 s^2\big) \,  s^{J-1}  \exp\big\{-\big(s - {\bf z}^\dagger \Lambda^{-1} X_{1,[J]}^{*,1}\big)^2/2\big\} ds\,.
\end{align*}
As $F_J$ increases monotonically the following upper bound applies
\begin{align} \nonumber
t & \cdot p_{J,\delta}\big(t; X_{1,[J]}^{*,1}\big) \\ \nonumber
& \, \leq \,  t^J  \exp\big\{-\big(t - {\bf z}^\dagger \Lambda^{-1} X_{1,[J]}^{*,1}\big)^2/2\big\} \, \big/ \, \int_0^t  s^{J-1}  \exp\big\{-\big(s - {\bf z}^\dagger \Lambda^{-1} X_{1,[J]}^{*,1}\big)^2/2\big\} ds \\ \label{eq:tpJ1}
& \, = \, J \, / \, \int_0^1 \exp\big\{\big(1 - s^{2/J}\big)\, t^2 /2 \, - \, \big(1 - s^{1/J}\big) t {\bf z}^\dagger \Lambda^{-1} X_{1,[J]}^{*,1}\big\} ds\,. \end{align}
The term (\ref{eq:tpJ1}) has the upper bound 
$$ J \, / \, \int_0^1 \exp\big\{\big(s^{1/J}-1\big)\, t \, |\Lambda^{-1/2} {\bf z}|\big\} \, ds\,. $$
Note that (\ref{eq:prop2}) implies $|\Lambda^{-1/2} {\bf z}|^2 \leq c_2^*\cdot \delta^2 \cdot J$. Thus, for $t^2 \leq 4 c_2^*J / \delta^2$, we have that
$$ t \cdot p_{J,\delta}\big(t; X_{1,[J]}^{*,1}\big) \, \leq \, J / \int_0^1 s^{2c_2^*} ds \, = \, (1+2c_2^*) \cdot J\,. $$ 
On the other hand, if $t> 2 \sqrt{c_2^*} \sqrt{J} / \delta$, it holds that
\begin{align*}
\big(1  - s^{2/J}\big)\, t^2 /2 \, - \, \big(1 - s^{1/J}\big) t {\bf z}^\dagger \Lambda^{-1} X_{1,[J]}^{*,1} & \, \geq \, t (1-s^{1/J}) \cdot\big\{  t/2 - \sqrt{c_2^*} \delta \sqrt{J}\big\} \\
& \, \geq \, 0\,, \end{align*}
for all $s\in [0,1]$ and $\delta \leq 1$. Therefore, the denominator in (\ref{eq:tpJ1}) is bounded from below by $1$ so that
$$ \sup_{t> 2 \sqrt{c_2^*} \sqrt{J} / \delta} t \cdot p_{J,\delta}\big(t; X_{1,[J]}^{*,1}\big) \, \leq \, J\,, $$
which completes the proof. \hfill $\square$ \\

\noindent {\it Proof of Lemma \ref{L:V}}: Writing $\Delta := {\cal M} - E{\cal M}$ and ${\cal M}_{n,{\cal S}} := E{\cal M} + {\cal S}$ where $E{\cal M} = n {\cal M}_n$ we deduce, by elementary matrix algebra, that
\begin{align} \nonumber
(&{\cal M} + {\cal S})^{-1} - (n {\cal M}_n + {\cal S})^{-1} \, = \, (\Delta + {\cal M}_{n,{\cal S}})^{-1} - {\cal M}_{n,{\cal S}}^{-1} \, = \, - ({\cal M} + {\cal S})^{-1} \, \Delta \, {\cal M}_{n,{\cal S}}^{-1} \\  \nonumber
& \, = \, - \big\{(\Delta + {\cal M}_{n,{\cal S}})^{-1} - {\cal M}_{n,{\cal S}}^{-1}\big\} \, \Delta \, {\cal M}_{n,{\cal S}}^{-1} \, - \, {\cal M}_{n,{\cal S}}^{-1} \, \Delta \, {\cal M}_{n,{\cal S}}^{-1} \\ \label{eq:matrix}
& \, = \, {\cal M}_{n,{\cal S}}^{-1} \, \Delta \, (\Delta + {\cal M}_{n,{\cal S}})^{-1} \, \Delta \, {\cal M}_{n,{\cal S}}^{-1}  \, - \, {\cal M}_{n,{\cal S}}^{-1} \, \Delta \, {\cal M}_{n,{\cal S}}^{-1}\,.
\end{align}
Now multiply the terms in (\ref{eq:matrix}) by ${\bf e}_0^\dagger$ and ${\bf e}_0$ from the left and the right, respectively, and take the expectation thereafter. As ${\cal M}$ is positive semidefinite the result has the upper bound
\begin{equation} \label{eq:matrix2}
{\bf e}_0^\dagger\, {\cal M}_{n,{\cal S}}^{-1} \{ E \Delta {\cal S}^{-1} \Delta\} \, {\cal M}_{n,{\cal S}}^{-1} \, {\bf e}_0 \, = \, n \cdot {\bf e}_0^\dagger\, {\cal M}_{n,{\cal S}}^{-1} \{ E \Delta_1 {\cal S}^{-1} \Delta_1\} \, {\cal M}_{n,{\cal S}}^{-1} {\bf e}_0
\end{equation}
where $\Delta = \sum_{j=1}^n \Delta_j$ and $$\{\Delta_j\}_{{\bf k},{\bf k}'} := 1_{{\cal N}}(X_j) \, \xi_{j;{\bf k}} \xi_{j;{\bf k}'} - E 1_{{\cal N}}(X_j) \, \xi_{j;{\bf k}} \xi_{j;{\bf k}'}.$$
Term (\ref{eq:matrix2}) is bounded from above by
\begin{equation} \label{eq:matrix3}
 n \, {\bf e}_0^\dagger {\cal M}_{n,{\cal S}}^{-1}\, \Big\{E   1_{{\cal N}}(X_1) \, \xi_{1;{\bf k}} \, \xi_{1,{\bf k}''} \, \sum_{{\bf k}'} \xi_{1;{\bf k}'}^2 \, {|{\bf k}'| \choose k'_1,\ldots,k'_J} \Big\}_{{\bf k},{\bf k}''} \, {\cal M}_{n,{\cal S}}^{-1} {\bf e}_0\,.
\end{equation}
Since
\begin{align*}
\sum_{{\bf k}} \xi_{1;{\bf k}}^2 {|{\bf k}| \choose k_1,\ldots,k_J} & \, = \, \sum_{k=0}^{K-1} \sum_{|{\bf k}|=k} {k \choose k_1,\ldots,k_J} \prod_{l=1}^J \langle X_1^* , \varphi_l \rangle^{2k_l} \\ 
& \, \leq \,  \sum_{k=0}^{K-1} |X_1^*|^{2k} \,  \sum_{|{\bf k}|=k} \, {k \choose k_1,\ldots,k_J} \, \prod_{l=1}^J \big(\langle X_1^* , \varphi_l \rangle^2 / |X_{1,[J]}^*|^2\big)^{k_l} \\
& \, = \,  \sum_{k=0}^{K-1} |X_1^*|^{2k} \, \leq \, 1 / \big(1 - |X_1^*|^2\big)\,,
\end{align*}
with $X_{1,[J]}^*$ as in (\ref{eq:pJ}), the term (\ref{eq:matrix3}) is smaller or equal to 
\begin{align} \nonumber
 n \, E 1_{{\cal N}}(X_1) \, \big|{\bf e}_0^\dagger {\cal M}_{n,{\cal S}}^{-1}\, \{\xi_{1;{\bf k}}\}_{{\bf k}}\big|^2 / (1-\delta^2) & \, = \, {\bf e}_0^\dagger {\cal M}_{n,{\cal S}}^{-1} \, \{E{\cal M}\} \,  {\cal M}_{n,{\cal S}}^{-1} {\bf e}_0 / (1-\delta^2) \\  \label{eq:matrix4}
& \, \leq \,  n^{-1} \, u_0 \, / \, (1-\delta^2)\,.
\end{align}
Combining (\ref{eq:matrix4}) with the inequality
$${\bf e}_0^\dagger (n{\cal M}_n + {\cal S})^{-1} {\bf e}_0 \, \leq \, n^{-1} \cdot u_0\,, $$ 
completes the proof. \hfill $\square$ \\

\noindent {\it Proof of Lemma \ref{L:B1}}: We have 
\begin{align*}
{\bf G}^\dagger {\cal S} {\bf G} & \, = \, \sum_{{\bf k}} G_{{\bf k}}^2(x) / {|{\bf k}| \choose k_1,\ldots,k_J} \\ & \, \leq \, \sum_{k=0}^{K-1} \|g^{(k)}(x;\cdot)\|^2 \frac1{k!^2} J^k \,  \sum_{|{\bf k}|=k} \, {k \choose k_1,\ldots,k_J} \prod_{l=1}^J (1/J)^{k_l} \\
& \, = \, \sum_{k=0}^{K-1} \|g^{(k)}(x;\cdot)\|^2 \frac1{k!^2} J^k\,,
\end{align*}
which proves the claim of the lemma. \hfill $\square$ \\

\noindent {\it Proof of Proposition \ref{P:2}}: We have
$$ \langle \varphi_j, \Gamma_{{\cal N}} \varphi_j\rangle \, = \, E \big( \langle X_1^*,\varphi_j \rangle^2 \mid X_1 \in {\cal N}\big)\, = \, E \big( \zeta_j \mid \zeta_j + \zeta_{-j} \leq \delta^2\big)\,, $$
where $\zeta_j := (\lambda_j^{1/2} \eta_j + z_j)^2$, $z_j := \langle z,\varphi_j\rangle$ and $\zeta_{-j} := \sum_{l\neq j} (\lambda_l^{1/2} \eta_l + z_l)^2$. Since $E(\zeta_j\mid \zeta_j\leq t) \leq E \zeta_j$ for all $t \in \mathbb{R}$, the independence between $\zeta_j$ and $\zeta_{-j}$ yields that 
\begin{align*}
E \big( \zeta_j \mid \zeta_j + \zeta_{-j} \leq \delta^2\big) & \, = \, \int E\big(\zeta_j \mid \zeta_j \leq \delta^2 - s\big) \cdot P[\zeta_j \leq \delta^2-s] \, dP_{\zeta_{-j}}(s) \, / \, P\big[\zeta_j+\zeta_{-j} \leq \delta^2\big] \\ & \, \leq \, E\zeta_j \, = \, z_j^2 + \lambda_j\,, \end{align*}
which completes the proof. \hfill $\square$ \\

\noindent {\bf Acknowledgment.} Moritz Jirak is funded by the Austrian Science Fund (FWF), Project 5485. Alexander Meister and Mario Pahl are supported by the Research Unit 5381 (DFG).

\end{document}